\newtheorem{theorem}{Theorem}[section]
\newtheorem*{theorem*}{Theorem}
\newtheorem{definition}[theorem]{Definition}
\newtheorem{remark}[theorem]{Remark}
\newcommand{\C}{\mathbb{C}}
\begin{document}

\title[Manifold with infinitely many fibrations over the sphere]
{Manifold with infinitely many fibrations over the sphere}

\author[W.Jelonek]{W\l odzimierz Jelonek}

\author[Z. Jelonek]{Zbigniew Jelonek}

{\address[W{\l}odzimierz Jelonek]{ Katedra Matematyki Stosowanej, Politechnika Krakowska, Warszawska 24, 31-155 Krak\'ow, Poland. \newline
              E-mail: {\tt wjelon@pk.edu.pl}}

{\address[Zbigniew Jelonek]{ Instytut Matematyczny, Polska Akademia Nauk, \'Sniadeckich 8, 00-656 Warszawa, Poland. \newline
              E-mail: {\tt najelone@cyf-kr.edu.pl}}

\keywords{Links of cones, fibration, lens space}
\subjclass[2010]{ 58K30, 58K20}

\begin{abstract}
We show that the manifold $X=S^2\times S^3$ has infinitely many structures of a fiber bundle over the base $B=S^2.$ In fact for every lens space $L(p,1)$ there is a fiber bundle $L(p,1)\to X\to B.$  \end{abstract}

\maketitle

\section{Introduction}

Let $X,F,B$ be smooth  manifolds, where $X,B$ are connected. Consider  smooth fibrations $F\to X\to B.$ If  $X=B=S^1$, then  there are infinitely many different fibers $F_k$ such that there exists a fibration $F_k\to X\to B.$
Tollefson in \cite{tlf} found some Seifert manifolds having infinitely many different fiber bundle structures over $S^1$. 
All Tollefson's examples  have connected fibers. W.P. Thurston in \cite{thu} showed that if a hyperbolic 3-manifold with $b_1>1$ fibers over $S^1$, then it fibers in infinitely many ways.
Moreover, if dimension of $F\ge 5$, we can construct such fiber bundles over other odd dimensional spheres using Milnor constructions of infinite family of $h-$cobordant manifolds (see  \cite{mil}).

However  to the best knowledge of the authors there were no  examples of  manifolds with infinitely many different structures of a fiber bundle (with different fibers)  over  even dimensional spheres.
Here using ideas from  \cite{gib}, \cite{Bb}  and \cite{j}  we show the following:

\vspace{5mm}

{\bf Theorem 3.5.}
 {\it The manifold $X=S^2\times S^3$ has infinitely many structures of a fiber bundle over the base $B=S^2.$ In fact, for every lens space $L(p,1)$ there is a fiber bundle:  $L(p,1)\to X\to B,$  $p=1,2,....$ }

 \section{Preliminaries}\label{section:preliminaries}

We start with two definitions.

\begin{definition}
Let $X\subset \Bbb {CP}^n$ be an algebraic variety. We assume $\mathbb {CP}^n$  to be a hyperplane at infinity of $\mathbb {CP}^{n+1}$.  Then by an algebraic cone $\overline{C(X)}\subset \Bbb {CP}^{n+1}$ with base $X$ we mean the set
$$\overline{C(X)}=\bigcup_{x\in X} \overline{O,x},$$ where $O$ is the center of coordinates in $\mathbb C^{n+1}\subset \mathbb {CP}^{n+1}$, and $\overline{O,x}$ means the projective line which goes through $O$ and $x.$ By an affine cone $C(X)$ we mean $\overline{C(X)}\setminus X.$
By the link of $C(X)$ we mean the set $L=\{ x\in C(X): ||x||=1\}.$
\end{definition}

\begin{definition}
The three-dimensional lens spaces $L(p;q)$ are quotients of the sphere $S^{3}$ by $\mathbb {Z} /p$-actions. More precisely, let $p$ and $q$ be coprime integers and consider $S^{3}$ as the unit sphere in $\C^2$.
Then the $\mathbb {Z} /p$-action on $S^{3}$ generated by the homeomorphism

    $$ (z_1 , z_2 ) \mapsto ( e^{ 2 \pi i / p} z_1 , e^{ 2 \pi i q / p} z_2 )$$

is free. The resulting quotient space is called the lens space $L(p;q).$
\end{definition}

\vspace{5mm}

It is well known that

$$H_k(L(p,q),\mathbb{Z}) = \left\lbrace \begin{array}{rl} \mathbb{Z}, & \qquad k = 0,3 \\ \mathbb{Z}/p\mathbb{Z}, & \qquad k = 1 \\ 0, & \qquad \operatorname{otherwise}\\\end{array}\right. $$

\vspace{5mm}

\section{Main Result}

Let $W_k$ denote the Veronese  embedding of degree $k$ of $\C\Bbb P^1$ into $\C\Bbb P^k$ given by   $\psi([z_0,z_1])=[z_0^k:z_0^{k-1}z_1:....,z_0z_1^{k-1}:z_1^k]$. Let $k\ge 1$ and consider the varieties $X_{k}=\phi( W_k\times \C\Bbb P^{1})\subset \C\Bbb P^{n_k}$, where $\phi:\C\Bbb P^k\times \C\Bbb P^1\rightarrow \C\Bbb P^{n_k},  \phi([z_0:z_1:...:z_k],[w_0,w_1])=[z_0w_0:z_0w_1:z_1w_0:z_1w_1,...:z_kw_0:z_kw_1]$ is the Segre embedding (here $n_k=2k+1)$.  Consider the affine cone $C(X_k).$
Let $L_k$ be the link of this cone. We have

\begin{theorem}\label{1}
All manifolds $L_{k}$ are diffeomorphic to $S^2\times S^3=X.$
\end{theorem}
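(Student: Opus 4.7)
The plan is to realize $L_k$ explicitly as an $S^3$-bundle over $S^2$ and to show that this bundle is trivial by computing its clutching function in $\pi_1(\mathrm{Diff}^+(S^3))$.

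First I would parametrize the cone by the polynomial map
$$ F\colon \mathbb{C}^2\times\mathbb{C}^2 \longrightarrow \mathbb{C}^{2k+2}, \qquad F(z,w) = (z_0^a z_1^{k-a} w_b)_{0\le a\le k,\ b\in\{0,1\}}. $$
An arbitrary point of $C(X_k)$ has the form $t\cdot(z_0^a z_1^{k-a}w_b)$, which is $F(z, tw)$, so $F$ surjects onto $C(X_k)$. A direct calculation shows that, away from the loci $\{z=0\}$ and $\{w=0\}$, the fibers of $F$ are orbits of the $\mathbb{C}^{*}$-action $\lambda\cdot(z,w)=(\lambda z,\lambda^{-k}w)$. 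Normalizing $F$ on $S^3\times S^3$ yields a smooth surjection onto $L_k$ whose fibers are orbits of the induced weight-$(1,-k)$ $S^1$-action; this action is free since $\lambda z=z$ with $z\in S^3$ forces $\lambda=1$. Hence $L_k\cong(S^3\times S^3)/S^1$.

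Next I would view $L_k$ as a fiber bundle over $S^2=S^3/S^1$ via $[z,w]\mapsto [z]$; equivalently, $L_k$ is the $S^3$-bundle associated to the Hopf principal bundle through the $S^1$-representation $\lambda\cdot w=\lambda^{-k}w$ on $\mathbb{C}^2\supset S^3$. Using the two standard hemispherical trivializations of the Hopf bundle (transition function the degree-one map $g\colon S^1\to S^1$), the clutching function of $L_k\to S^2$ is the loop $u\mapsto u^{-k}\, I_{\mathbb{C}^2}$ in the central circle $U(1)\subset U(2)\subset\mathrm{Diff}^+(S^3)$.

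By Smale's theorem $\mathrm{Diff}^+(S^3)\simeq SO(4)$, so oriented $S^3$-bundles over $S^2$ are classified by $\pi_1(SO(4))=\mathbb{Z}/2$. The central inclusion $U(1)\hookrightarrow U(2)$ doubles $\pi_1$ (since $\det(\lambda I)=\lambda^2$), and $\pi_1(U(2))\to\pi_1(SO(4))$ is reduction mod $2$ (the loop $\mathrm{diag}(e^{2\pi it},1)$ is a full $2$-plane rotation in $\mathbb{R}^4$). Composing, the scalar loop becomes $2\equiv 0 \pmod 2$, and every power of it is null-homotopic in $\mathrm{Diff}^+(S^3)$. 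Therefore $L_k\to S^2$ is a trivial $S^3$-bundle, and $L_k\cong S^2\times S^3$. The main obstacle lies in the first step --- pinning down the correct $\mathbb{C}^{*}$-action and verifying its free action on $S^3\times S^3$; once that is in hand, the remainder is a standard clutching-function computation combined with classical $\pi_1$ facts.
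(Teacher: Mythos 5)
Your proof is correct, but it takes a genuinely different route from the paper. The paper views $L_k$ as the tautological circle bundle over $X_k\cong S^2\times S^2$, checks $\pi_1(L_k)=0$ from the homotopy sequence of the $S^3$-fibration $L_k\to W_k\cong S^2$, and then quotes Giblin's theorem that a simply connected circle bundle over $S^2\times S^2$ is diffeomorphic to $S^2\times S^3$; the classification work is thus outsourced to \cite{gib}. You instead produce an explicit model $L_k\cong (S^3\times S^3)/S^1$ for the weight-$(1,-k)$ action, recognize this as the $S^3$-bundle associated to the Hopf bundle via the scalar representation $\lambda\mapsto \lambda^{-k}I_{\mathbb{C}^2}$, and kill the clutching class: the scalar loop has winding number $2$ in $\pi_1(U(2))\cong\mathbb{Z}$ and hence dies in $\pi_1(SO(4))\cong\mathbb{Z}/2$. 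This is self-contained modulo classical $\pi_1$ facts and has the advantage of exhibiting the trivialization concretely. Two small remarks. First, you do not need $\mathrm{Diff}^+(S^3)\simeq SO(4)$ at all (and that statement is the Smale conjecture, proved by Hatcher, not a theorem of Smale, who treated $S^2$): your clutching loop already lies in $U(2)\subset SO(4)$ and is null-homotopic there, so the bundle is trivial as a linear sphere bundle and a fortiori as a smooth fiber bundle; the full diffeomorphism-group statement would only be needed to detect nontriviality. Second, the identification of the normalized image of $F|_{S^3\times S^3}$ with the link $L_k$ (a smooth bijection between compact $5$-manifolds intertwining the $S^1$-orbits with the fibers of the radial projection) deserves one sentence of justification --- the image of $F|_{S^3\times S^3}$ meets every ray of the cone exactly once and radial projection is a diffeomorphism onto the link --- but this is standard and you correctly flag it as the only delicate point.
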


\begin{proof}

We use here the following Theorem  of P. J. Giblin (see \cite{gib}):

\vspace {2mm}

\begin{theorem}
The total space of a circle bundle over $S^2\times S^2$ is diffeomorphic to $S^2\times S^3$  if and only if it is simply connected.
\end{theorem}

\begin{remark}
{\rm P.J.  Giblin formulated his result in the topological category only, but it is easy to see that his proof works in a smooth case also.}
\end{remark}

Now let $W_k$ denote the Veronese  embedding of degree $k$ of $\C\Bbb P^1$ into $\C\Bbb P^k.$ Let $
k\ge 1$ and consider the varieties $X_{k}=\phi( W_k\times \C\Bbb P^{1})\subset \C\Bbb P^{n_k}$, where $\phi$ is the Segre embedding. Consider the affine cone $C(X_k).$
Let $L_k$ be the link of this cone.
By construction, $X_{k}$ is the  union of projective lines $X_k=\bigcup_{a\in  W_k}  \phi(\{a\} \times \Bbb P^1)$. This means that the affine cone ${C(X_{k})}$ is the  union of planes
$\bigcup_{a\in W_k}   C(\phi(\{a\} \times \Bbb P^1)).$ Hence the link $L_{k}$ of this cone is a union of  spheres $S^3.$ In fact using the Ehresmann Theorem, it is easy to observe that  this link is a  bundle over $W_k\cong S^2$ with the projection being the  composition of the projection $p: \C\Bbb P^{n_k+1} \setminus \{0\}\to \C\Bbb P^{n_k}$  and the projection $q:  W_k\times\C \Bbb P^{1}\to  W_k.$ Consequently $L_k$ is a $S^3$ bundle over $S^2.$  By a homotopy sequence
$$0=\pi_1(S^3)\to L_k\to \pi_1(S^2)\to 0$$ we get $\pi_1(L_k)=0$. Since $L_k$ is the (Hopf) circle bundle over $X_k\cong S^2\times S^2$  our result follows from the Giblin theorem.
\end{proof}

\begin{theorem}\label{2}
Let $W\subset \C\Bbb P^n$ be a smooth rational curve   of degree $d.$ Let $C(W)$ be an affine cone with base $W.$ Then the link $L$ of this cone at $0$ is diffeomorphic to the lens space $L(d,1).$
\end{theorem}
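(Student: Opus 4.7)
\emph{Proof plan.} The approach is to recognize the link $L$ as a smooth $S^1$-bundle over $W \cong S^2$, compute its Euler number via the degree of $W$, and match it to the known description of $L(d,1)$ as an $S^1$-bundle over the $2$-sphere.

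First, I would observe that
\[
L=\{v\in\mathbb{C}^{n+1}: [v]\in W,\ \|v\|=1\}
\]
is precisely the preimage of $W$ under the Hopf fibration $S^{2n+1}\to\mathbb{CP}^n$. Hence $L\to W$ is the pullback of the Hopf $S^1$-bundle along the inclusion $W\hookrightarrow\mathbb{CP}^n$, and in particular is a smooth principal $S^1$-bundle. Since $W$ is a smooth rational curve, it is biholomorphic (hence diffeomorphic) to $\mathbb{CP}^1\cong S^2$, so $L$ is an $S^1$-bundle over $S^2$. Such bundles are classified by $H^2(S^2,\mathbb{Z})\cong\mathbb{Z}$, i.e.\ by their Euler number, so it remains to compute this invariant.

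Second, I would carry out the Euler-class computation. The Hopf bundle over $\mathbb{CP}^n$ is the unit circle bundle of the tautological line bundle $\mathcal{O}_{\mathbb{CP}^n}(-1)$, whose first Chern class is $-h$, where $h$ is the hyperplane class. Because $W\subset\mathbb{CP}^n$ is a curve of degree $d$, one has $\mathcal{O}_{\mathbb{CP}^n}(1)|_W\cong\mathcal{O}_{\mathbb{CP}^1}(d)$, so the restriction $\mathcal{O}_{\mathbb{CP}^n}(-1)|_W\cong\mathcal{O}_{\mathbb{CP}^1}(-d)$. Consequently $L\to W$ is the unit circle bundle of $\mathcal{O}_{\mathbb{CP}^1}(-d)$, with Euler number $\pm d$.

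Third, I would identify the lens space $L(d,1)$ with the same bundle. Viewing $S^3\subset\mathbb{C}^2$, the diagonal $\mathbb{Z}/d$-action from the definition of $L(d,1)$ commutes with the Hopf $S^1$-action, so $L(d,1)=S^3/(\mathbb{Z}/d)$ carries a residual free $S^1\cong S^1/(\mathbb{Z}/d)$-action whose quotient is $\mathbb{CP}^1$. The long exact homotopy sequence for this $S^1$-fibration yields $\pi_1(L(d,1))=\mathbb{Z}/d$, so its Euler number is $\pm d$. Since smooth $S^1$-bundles over $S^2$ with Euler numbers of equal absolute value have diffeomorphic total spaces (the sign being absorbed by an orientation-reversing diffeomorphism of the fiber), I conclude $L\cong L(d,1)$.

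The main technical step is the Euler-number computation, i.e.\ the identification $\mathcal{O}_{\mathbb{CP}^n}(-1)|_W\cong\mathcal{O}_{\mathbb{CP}^1}(-d)$; this is exactly where the hypothesis that $W$ has degree $d$ enters, via the definition $d=\deg(\mathcal{O}_{\mathbb{CP}^n}(1)|_W)$. Once this is in hand, the rest reduces to the standard classification of oriented circle bundles over $S^2$.
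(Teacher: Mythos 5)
Your proof is correct, but it takes a genuinely different route from the paper's. The paper likewise starts by observing that $L$ is a principal circle bundle over $S^2$ and invokes the classification of such bundles (citing Blair) to conclude that $L\cong L(p,1)$ for some $p$; however, it then determines $p$ homologically rather than through the Euler class: it quotes the theorem of Bobadilla--Fernandes--Sampaio that the torsion part of $H^2(L,\mathbb{Z})$ is $\mathbb{Z}/(d)$, combines this with Koll\'ar's observation that $L$ is a rational homology $3$-sphere (so $H_1(L,\mathbb{Z})$ is all torsion), deduces $H_1(L,\mathbb{Z})\cong\mathbb{Z}/(d)$, and concludes $p=d$. Your argument replaces those two external citations with the direct identification of $L$ as the unit circle bundle of $\mathcal{O}_{\mathbb{CP}^n}(-1)|_W\cong\mathcal{O}_{\mathbb{CP}^1}(-d)$, so that the Euler number $\pm d$ is read off immediately from the definition of degree; this is more self-contained and makes the appearance of $d$ transparent, whereas the paper's route is the one that generalizes to situations where the link is accessible only through its (co)homology. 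Both arguments ultimately rest on the same classification of oriented circle bundles over $S^2$, and your explicit handling of the sign ambiguity by an orientation-reversing diffeomorphism of the fiber correctly closes the one point the classification leaves open.
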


\begin{proof}
Note that $L$ is a principal circle bundle over the sphere. By \cite{blair} (see Theorem 2.1, Theorem 2.2 and Theorem 2.3) the group of such bundles is cyclic and generated by $S^3=L(1,1).$ Moreover, $pS^3=S^3/G_p$, where $G_p=\Bbb Z/(p)$. Hence $L$ is diffeomorphic to some $pS^3$, which  is the lens space $L(p,1)$. By \cite{Bb}, Thm. 3.5  we have that the torsion part of $H^2(L, \Bbb Z)$ is equal to $\Bbb Z/(d).$ On the other hand by \cite{kol}, point 11,  the space $L$ is a rational homology $3$-sphere, in particular $H_1(L,\Bbb Z)$ is a torsion group. Since the torsion part of $H_1(L,\Bbb Z)$ coincides with the torsion part of $H^2(L,\Bbb Z)$ we have $H_1(L,\Bbb Z)=\Bbb Z/(d).$ Hence $p=d$ and $L=L(d,1).$
\end{proof}

\begin{theorem}
 The manifold $X=S^2\times S^3$ has infinitely many structures of a fiber bundle over the base $B=S^2.$ In fact for every lens space $L(p,1)$ there is a fiber bundle:  $L(p,1)\to X\to B,$  $p=1,2,....$
\end{theorem}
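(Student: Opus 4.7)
The plan is to revisit the variety $X_k = \phi(W_k \times \mathbb{CP}^1) \subset \mathbb{CP}^{n_k}$ from Theorem~\ref{1} and to fiber its link $L_k$ using the \emph{second} factor projection instead of the first. In Theorem~\ref{1} the projection $W_k \times \mathbb{CP}^1 \to W_k$ was used to exhibit $L_k$ as an $S^3$-bundle over $S^2$; here I would instead use $q'\colon W_k \times \mathbb{CP}^1 \to \mathbb{CP}^1$, which presents $X_k$ as a family of copies of the curve $W_k$ parameterized by $\mathbb{CP}^1 = S^2$.

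First I would check that for each fixed $[w_0:w_1] \in \mathbb{CP}^1$, the image $\phi(W_k \times \{[w_0:w_1]\})$ is a smooth rational curve of degree $k$ in $\mathbb{CP}^{n_k}$: the Segre map restricted to $\mathbb{CP}^k \times \{[w_0:w_1]\}$ is linear (the coordinates $z_i w_j$ depend linearly on the $z_i$ with the $w_j$ fixed), so this slice embeds as a linear $\mathbb{CP}^k$ in $\mathbb{CP}^{n_k}$ and the restriction of $\phi$ sends $W_k$ onto a copy of the degree-$k$ Veronese curve inside it. Hence
\[
C(X_k) \;=\; \bigcup_{[w]\in \mathbb{CP}^1} C\bigl(\phi(W_k \times \{[w]\})\bigr),
\]
so intersecting with the unit sphere decomposes $L_k$ into a union of the links of these sub-cones, each of which is diffeomorphic to the lens space $L(k,1)$ by Theorem~\ref{2}.

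To upgrade this set-theoretic decomposition to a smooth fiber bundle I would apply Ehresmann's theorem to the composition $L_k \hookrightarrow \mathbb{C}^{n_k+1}\setminus\{0\} \to \mathbb{CP}^{n_k} \xrightarrow{q'} \mathbb{CP}^1$. Properness is automatic since $L_k$ is compact, and the submersion property follows from smoothness of $X_k$ together with the transversality of the radial rays to the unit sphere. Combining this with the diffeomorphism $L_k \cong S^2 \times S^3$ from Theorem~\ref{1} then yields, for every $k \geq 1$, a smooth fiber bundle $L(k,1) \to S^2 \times S^3 \to S^2$. That these bundle structures are genuinely distinct for different $k$ is automatic from the homology formulas recalled in Section~\ref{section:preliminaries}: $H_1(L(k,1),\mathbb{Z}) = \mathbb{Z}/k\mathbb{Z}$, so the fibers are pairwise non-homeomorphic.

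The only real obstacle I anticipate is the Ehresmann submersion check; this is essentially routine but is the one place where the set-theoretic picture must be converted into an honest smooth bundle. Apart from that, the proof assembles cleanly from Theorems~\ref{1} and~\ref{2}.
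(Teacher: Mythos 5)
Your proposal is correct and follows essentially the same route as the paper: decompose $C(X_k)$ via the second-factor projection $W_k\times\C\Bbb P^1\to\C\Bbb P^1$, identify each fiber link as $L(k,1)$ via Theorem \ref{2}, use Ehresmann to get the bundle structure, and invoke Theorem \ref{1} for $L_k\cong S^2\times S^3$. Your added checks (the degree of each slice curve under the Segre embedding, and the pairwise non-homeomorphism of the fibers via $H_1$) are details the paper leaves implicit, but the argument is the same.
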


\begin{proof}
Let $W_k$ denote the Veronese  embedding of degree $k$ of $\C\Bbb P^1$ into $\C\Bbb P^k.$ Let $
k\ge 1$ and consider the varieties $X_{k}=\phi( W_k\times \C\Bbb P^{1})\subset \C\Bbb P^{n_k}$, where $\phi$ is the Segre embedding. Consider the affine cone $C(X_k).$
Let $L_k$ be the link of this cone.
By construction, $X_{k}$ is the  union of projective rational curves $X_k=\bigcup_{a\in \C\Bbb P^1} \phi(W_k \times \{a\})$. This means that the affine cone ${C(X_{k})}$ is the  union of cones
$\bigcup_{a\in \C\Bbb P^1}  C(\phi(W_k \times \{a\})).$ Thus by Theorem \ref{2} the link $L_{k}$ of this cone is a union of lens spaces $L(k,1).$ In fact using the Ehresmann Theorem, it is easy to observe that  this link is a  bundle over $\C\Bbb P^1\cong S^2$ with the projection being the  composition of the projection $p: \C\Bbb P^{n_k+1} \setminus \{0\}\to \C\Bbb P^{n_k}$  and the projection $q:  W_k\times\C \Bbb P^{1}\to \C\Bbb P^1.$ Consequently $L_k$ is a $L(k,1)$ bundle over $S^2.$ By Theorem \ref{1}
this finishes the proof.\end{proof}

\vspace{5mm}

Manuscript does not contain any data.


\begin{thebibliography}{99}




\bibitem{blair} Blair, D.E.,
Riemannian Geometry of Contact and Symplectic Manifolds,
Second Edition, Progress in Mathematics
Volume 203, Birkh\"auser (2010).





\bibitem{Bb} Bobadilla, J.F. de; Fernandes, A. and Sampaio, J. E.
{\it Multiplicity and degree as bi-lipschitz invariants for complex sets}.
Journal of Topology, vol. 11 (2018), 958-966.


\bibitem{j} Fernandes, A., Jelonek Z., Sampaio J. E., {\it Bi-Lipschitz  equivalent cones with  different degrees}, arXiv:2309.07078.


\bibitem{gib} Giblin, P. J., {\it Circle bundles over a complex quadric},  Journal of the London Mathematical Society, Volume s1-43, Issue 1, 1968, 323–324.








\bibitem{kol} Kollar, J., 
{\it Links of complex analytic singularities}.  In: Surveys in Differential Geometry XVIII.
International Press of Boston, (2013), 157--192.

\bibitem{mil}   Milnor. J., {\em Whitehead torsion}, 
Bull. Amer. Math. Soc. 72(3), 358-426 (1966). 


\bibitem{thu} Thurston. W, {\it  A norm for the homology of 3-manifolds}, Memoirs of the American Mathematical Society, 59(339):99–130, 1986.


\bibitem{tlf} Tollefson. J.,  {\it $3$-manifolds fibering over $S^1$ with nonunique connected fiber}, Proc. Amer. Math.Soc. 21 (1969), 79–80.
 







\end{thebibliography}
\end{document}